\rm \setlength{\textwidth}{160mm}
\newtheorem{theorem}{Theorem}
\newtheorem{lemma}[theorem]{Lemma}
\newtheorem{corollary}[theorem]{Corollary}
\title{\bf The $(k,\ell)$-rainbow index of random graphs\footnote{Supported by NSFC and the ``973'' program.}}
\author{\small Qingqiong Cai, Xueliang Li, Jiangli Song\\
{\small Center for Combinatorics and LPMC-TJKLC}\\ {\small Nankai
University}\\ {\small Tianjin 300071, China}\\ {\small Email:
cqqnjnu620@163.com, lxl@nankai.edu.cn,
songjiangli@mail.nankai.edu.cn}}
\date{}
\begin{document}
\maketitle

\begin{abstract}
A tree in an edge colored graph is said to be a rainbow tree if no
two edges on the tree share the same color. Given two positive
integers $k$, $\ell$ with $k\geq 3$, the \emph{$(k,\ell)$-rainbow
index} $rx_{k,\ell}(G)$ of $G$ is the minimum number of colors
needed in an edge-coloring of $G$ such that for any set $S$ of $k$
vertices of $G$, there exist $\ell$ internally disjoint rainbow
trees connecting $S$. This concept was introduced by Chartrand et.
al., and there have been very few related results about it. In this
paper, We establish a sharp threshold function for
$rx_{k,\ell}(G_{n,p})\leq k$ and $rx_{k,\ell}(G_{n,M})\leq k,$
respectively, where $G_{n,p}$ and $G_{n,M}$ are the usually defined
random graphs.
\end{abstract}

\noindent{\bf Keywords:} rainbow index, random graphs, threshold function

\noindent{\bf AMS subject classification 2010:} 05C05, 05C15, 05C80, 05D40.

\section {\large Introduction}

All graphs in this paper are undirected, finite and simple. We
follow \cite{Bondy} for graph theoretical notation and terminology
not described here. Let $G$ be a nontrivial connected graph with an
\emph{edge-coloring} $c: E(G)\rightarrow\{1, 2,\cdots, t\}, t \in
\mathbb{N}$, where adjacent edges may be colored the same. A path is
said to be a \emph{rainbow path} if no two edges on the path have
the same color. An edge-colored graph $G$ is called \emph{rainbow
connected} if for every pair of distinct vertices of $G$ there
exists a rainbow path connecting them. The \emph{rainbow connection
number} of a graph $G$, denoted by $rc(G)$, is defined as the
minimum number of colors that are needed in order to make $G$
rainbow connected. The \emph{rainbow $k$-connectivity} of $G$,
denoted by $rc_{k}(G)$, is defined as the minimum number of colors
in an edge-coloring of $G$ such that every two distinct vertices of
$G$ are connected by $k$ internally disjoint rainbow paths. These
concepts were introduced by Chartrand et. al. in \cite{ChartrandGP,
ChartrandGL}. Recently, there have been published a lot of results
on the rainbow connections. We refer the readers to \cite{LiSun3,
LiSun} for details.

Similarly, a tree $T$ is called a \emph{rainbow tree} if no
two edges of $T$ have the same color. For $S\subseteq V(G)$, a
\emph{rainbow $S$-tree} is a rainbow tree connecting the vertices of
$S$. Suppose that $\{T_{1},T_{2},\cdots, T_{\ell}\}$ is a set of
rainbow $S$-trees. They are called \emph{internally disjoint} if
$E(T_{i})\cap E(T_{j})=\emptyset$ and $V(T_{i})\bigcap V(T_{j})=S$
for every pair of distinct integers $i,j$ with $1\leq i,j\leq \ell$
(Note that the trees are vertex-disjoint in $G\setminus S$). Given
two positive integers $k$, $\ell$ with $k\geq 2$, the
\emph{$(k,\ell)$-rainbow index} $rx_{k,\ell}(G)$ of $G$ is the
minimum number of colors needed in an edge-coloring of $G$ such that
for any set $S$ of $k$ vertices of $G$, there exist $\ell$
internally disjoint rainbow $S$-trees. In particular, for $\ell=1$,
we often write $rx_{k}(G)$ rather than $rx_{k,1}(G)$ and call it the
\emph{$k$-rainbow index}. It is easy to see that
$rx_{2,\ell}(G)=rc_{\ell}(G)$. So the $(k,\ell)$-rainbow index can
be viewed as a generalization of the rainbow connectivity. In the
sequel, we always assume $k\geq 3$.

The concept of $(k,\ell)$-rainbow index was also introduced by
Chartrand et. al. in \cite{Chartrand}. They determined the
$k$-rainbow index of all unicyclic graphs and the $(3,\ell)$-rainbow
index of complete graphs for $\ell=1,2$. In \cite{Cai}, we
investigated the $(k,\ell)$-rainbow index of complete graphs for
every pair $k,\ell$ of integers. We proved that for every pair
$k,\ell$ of positive integers with $k\geq 3$, there exists a
positive integer $N=N(k,\ell)$ such that $rx_{k,\ell}(K_{n})=k$ for
every integer $n\geq N$.

In this paper, we study the $(k,\ell)$-rainbow index of random
graphs $G_{n,p}$ and $G_{n,M}$ and establish a sharp threshold
function for the property $rx_{k,\ell}(G_{n,p})$ $\leq k$ and
$rx_{k,\ell}(G_{n,M})$ $\leq k,$ respectively, where $G_{n,p}$ and
$G_{n,M}$ are defined as usual in \cite{BB}.

\section{Basic Concepts}

The two most frequently occurring probability models of random
graphs are $\mathcal{G}(n, p)$ and $\mathcal{G}(n, M)$. The model
$\mathcal{G}(n, p)$ consists of all graphs on $n$ vertices, in which
the edges are chosen independently and randomly with probability
$p$. Whereas the model $\mathcal{G}(n, M)$ consists of all graphs on
$n$ vertices and $M$ edges, in which each graph has the same
probability. Let $G_{n,p}$, $G_{n,M}$ stand for random graphs from
the models $\mathcal{G}(n, p)$ and $\mathcal{G}(n, M)$. We say that
an event $E=E(n)$ happens \emph{almost surely} (or a.s. for short)
if $\lim_{n\rightarrow\infty}Pr[E(n)]=1$. Let $F,G,H$ be graphs on
$n$ vertices. A property $Q$ is said to be \emph{monotone} if
whenever $G\subseteq H$ and $G$ satisfies $Q$, then $H$ also
satisfies $Q$. Moreover, We call a property $Q$ \emph{convex} if
whenever $F\subset G \subset H$, $F$ satisfies $Q$ and $H$ satisfies
$Q$, then $G$ also satisfies $Q$. For a graph property $Q$, a
function $p(n)$ is called a \emph{threshold function} of $Q$ if

$\bullet$  $\frac{p^{'}(n)}{p(n)} \rightarrow 0$, then $G_{n,
p^{'}(n)}$ almost surely does not satisfy $Q$; and

$\bullet$ $\frac{p^{''}(n)}{p(n)} \rightarrow \infty$, then $G_{n,
p^{''}(n)}$ almost surely satisfies $Q$.

Furthermore, $p(n)$ is called a \emph{sharp threshold function} of
$Q$ if there are two positive constants $c$ and $C$ such that

$\bullet$ for every $p^{'}(n) \leq cp(n)$, $G_{n, p^{'}(n)}$ almost
surely does not satisfy $Q$; and

$\bullet$ for every $p^{''}(n) \geq Cp(n)$, $G_{n, p^{''}(n)}$
almost surely satisfies $Q$.

Similarly, we can define $M(n)$ as a threshold function of $Q$ in
the model $\mathcal{G}(n, M)$.

It is well known that all monotone graph properties have a threshold
function \cite{BB}. Obviously, for every pair $k, \ell$ of positive
integers, the property that the $(k,\ell)$-rainbow index is at most
$k$ is monotone, and thus has a threshold.

\section{Main results}

In this section, we study the $(k,\ell)$-rainbow index of random graphs.
\begin{theorem}\label{thm4.1}
For every pair $k,\ell$ of positive integers with $k\geq3$,
$\sqrt[^k\!]{\frac{log_{a}n}{n}}$ is a sharp threshold function for
the property $rx_{k,\ell}(G_{n,p})\leq k$, where
$a=\frac{k^{k}}{k^{k}-k!}$.
\end{theorem}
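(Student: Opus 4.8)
The plan is to prove the two halves of the sharp threshold separately, reducing — via the standard coupling $G_{n,p'}\subseteq G_{n,p}$ for $p'\le p$, which is legitimate because the property $rx_{k,\ell}\le k$ is monotone — to the statements: (i) there is a small constant $c>0$ with $rx_{k,\ell}(G_{n,p})>k$ a.s.\ when $p=c\sqrt[k]{\log_a n/n}$; and (ii) there is a large constant $C$ with $rx_{k,\ell}(G_{n,p})\le k$ a.s.\ when $p=C\sqrt[k]{\log_a n/n}$. Everything hinges on one structural remark. If $|S|=k$ and $G$ is edge-coloured with exactly $k$ colours, a rainbow $S$-tree has at most $k$ edges, hence at most $k+1$ vertices; so if $G[S]$ is edgeless, every rainbow $S$-tree is a \emph{spanning star} $S\cup\{w\}$ centred at some $w\notin S$ adjacent to all of $S$ (a \emph{common neighbour} of $S$). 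Conversely, $\ell$ spanning stars at distinct common neighbours, each coloured rainbow, are automatically internally disjoint, so they certify that $S$ is ``good''. Since in this density range almost every $k$-set $S$ has $G[S]$ edgeless, for such $S$ being good is essentially equivalent to admitting $\ell$ rainbow spanning stars.

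For the lower bound (i), I would use a first/second moment argument to exhibit a $k$-set $S$ with $G[S]$ edgeless that has fewer than $\ell$ common neighbours; by the remark, no $k$-colouring yields $\ell$ internally disjoint rainbow $S$-trees, so $rx_{k,\ell}(G)>k$. The number of common neighbours of a fixed $S$ is $\mathrm{Bin}(n-k,p^k)$, with mean $\sim np^k=c^k\log_a n=c^k\ln n/\ln a$, so a binomial tail estimate gives $\Pr[\text{fewer than }\ell]\sim\mathrm{poly}(\log n)\,n^{-c^k/\ln a}$, independently of the (unlikely) event that $G[S]$ has an edge. Hence the expected number of such $S$ is $\sim\mathrm{poly}(\log n)\,n^{\,k-c^k/\ln a}$, which tends to infinity once $c^k<k\ln a$; for $c$ small enough the second moment is dominated by the diagonal and the disjoint pairs (overlapping pairs being too few to matter), so a.s.\ at least one such $S$ exists.

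For the upper bound (ii), fix $G=G_{n,p}$ and colour its edges independently and uniformly from $\{1,\dots,k\}$. For a fixed $S$ and $w\notin S$, the probability that $w$ is a common neighbour of $S$ and the $k$ edges from $w$ to $S$ receive $k$ distinct colours is $p^k\cdot k!/k^k$, and these events are independent over $w$; so the number of rainbow spanning stars for $S$ is $\mathrm{Bin}\!\bigl(n-k,\,\tfrac{k!}{k^k}p^k\bigr)$, with mean $\sim\tfrac{k!}{k^k}C^k\log_a n$. The value $a=\tfrac{k^k}{k^k-k!}$ is precisely the one making $\tfrac{k!}{k^k}=1-\tfrac1a$, and the point is that $\Pr[\text{fewer than }\ell\text{ rainbow stars for }S]\sim\mathrm{poly}(\log n)\,n^{-C^k(k!/k^k)/\ln a}$. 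A union bound over the $\binom nk=\Theta(n^k)$ choices of $S$ shows the random colouring fails for some $S$ with probability at most $\mathrm{poly}(\log n)\,n^{\,k-C^k(k!/k^k)/\ln a}\to0$, provided $C^k\cdot\tfrac{k!}{k^k}>k\ln a$; since then $C^k/c^k>k^k/k!>1$, the two constants are genuinely separated. Finally, because $\Pr_{G,c}[\text{colouring fails}]\to0$, Markov's inequality applied to $G\mapsto\Pr_c[\text{failure}\mid G]$ shows that a.s.\ $G$ admits a colouring with $rx_{k,\ell}(G)\le k$.

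The main obstacle is getting the constants to straddle the union bound over the $\Theta(n^k)$ vertex sets $S$: one needs $n^k e^{-\rho\, np^k}\to0$, where the ``rate'' $\rho$ equals $1$ in the lower bound and $k!/k^k$ (the probability cost of demanding a rainbow star) in the upper bound — this is exactly why the threshold is measured on a $\log_a$ scale, with $a$ as given, and why $c^k$ and $C^k$ must lie on opposite sides of $k\ln a$ and $k\ln a\cdot k^k/k!$ respectively. The rest — the second-moment bookkeeping in (i) and the binomial-tail asymptotics for $\mathrm{Bin}(n-k,p^k)$ in both directions — is routine; that almost every relevant $S$ induces an edgeless subgraph and that $G_{n,p}$ is connected throughout this range are immediate, since $\sqrt[k]{\log_a n/n}\gg\log n/n$.
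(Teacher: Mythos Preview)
Your argument is correct. The upper half is essentially the paper's: take a uniformly random $k$-colouring, count rainbow stars $S\cup\{w\}$ centred at common neighbours of $S$, and union-bound over the $\binom nk$ choices of $S$. The only organisational difference is that you fold the graph randomness and the colouring randomness into a single $\mathrm{Bin}\bigl(n-k,\tfrac{k!}{k^k}p^k\bigr)$ computation, whereas the paper first shows via Chernoff that a.s.\ every $k$-set has at least $2k\log_a n$ common neighbours and only afterwards randomises the colouring. Both exploit the identity $1-\tfrac{k!}{k^k}=\tfrac1a$ in the same way.

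The lower half genuinely differs. The paper avoids the second-moment method entirely: it fixes a set $H$ of $n^{1/(2k+1)}$ vertices, observes that $H$ is a.s.\ independent, partitions $H$ into $t=|H|/k$ disjoint $k$-tuples $H_1,\dots,H_t$, and notes that the events ``$H_i$ has no common neighbour in $V\setminus H$'' are mutually \emph{independent} (they depend on pairwise disjoint edge sets), so one of them holds a.s.\ as soon as $t\cdot(1-p^k)^{\,n-|H|}\to\infty$. Your second-moment route reaches the same conclusion but must handle the correlations between overlapping pairs $S,S'$; this works, but only after pushing $c$ small enough that the off-diagonal terms die (effectively $c^k<\ln a$ rather than $c^k<k\ln a$). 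The paper's disjoint-blocks trick is shorter and dodges that bookkeeping. On the other hand, your version directly targets ``fewer than $\ell$ common neighbours'' rather than ``none'', though the paper's stronger conclusion already covers every $\ell\ge1$.
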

\begin{proof}
The proof will be two-fold. For the first part, we show that, there
exists a positive constant $c_{1}$ such that for every $p\geq
c_{1}\sqrt[^k\!]{\frac{log_{a}n}{n}}$, almost surely
$rx_{k,\ell}(G_{n,p})\leq k$, which can be derived from the
following two claims.

\emph{\textbf{Claim 1:}} For any $c_{1}\geq3$, if $p\geq
c_{1}\sqrt[^k\!]{\frac{log_{a}n}{n}}$, then almost surely any $k$
vertices in $G_{n,p}$ have at least $2klog_{a}n$ common neighbors.

For any $S\in V(G_{n,p})$ with $|S|=k$, let $D(S)$ denote the event
that the vertices in $S$ have at least $2klog_{a}n$ common
neighbors. Then it suffices to prove that, for $p=
c_{1}\sqrt[^k\!]{\frac{log_{a}n}{n}}$, $Pr[\ \bigcap\limits_{S}D(S)\
]\rightarrow 1$, as $n\rightarrow \infty$. Define $Y$ as the number
of common neighbors of all the vertices in $S$. Then $Y\thicksim
Bin(n-k, (c_{1}\sqrt[^k\!]{\frac{log_{a}n}{n}})^{k})$ and
$E(Y)=c_{1}^{k}\frac{n-k}{n}log_{a}n$. Assume that
$n>\frac{c_{1}^{k}k}{c_{1}^{k}-2k}$. Using the Chernoff Bound
\cite{Alon}, we get that

\begin{eqnarray*}
 Pr[\overline{D(S)}] &=& Pr[Y<2klog_{a}n] \\
  &=& Pr[Y<\frac{c_{1}^{k}(n-k)}{n}log_{a}n(1-\frac{(c_{1}^{k}-2k)n-c_{1}^{k}k}{c_{1}^{k}(n-k)})]\\
  &\leq&  e^{-\frac{c_{1}^{k}(n-k)}{2n}log_{a}n(\frac{(c_{1}^{k}-2k)n-c_{1}^{k}k}{c_{1}^{k}(n-k)})^{2}}\\
  &<& n^{-\frac{c_{1}^{k}(n-k)}{2n}(\frac{(c_{1}^{k}-2k)n-c_{1}^{k}k}{c_{1}^{k}(n-k)})^{2}}.\\
\end{eqnarray*}

Note that the assumption $n>\frac{c_{1}^{k}k}{c_{1}^{k}-2k}$ ensures
$\frac{(c_{1}^{k}-2k)n-c_{1}^{k}k}{c_{1}^{k}(n-k)}>0$. So we can
apply the Chernoff Bound to scaling the above inequalities. The last
inequality holds, since $1<a=\frac{k^{k}}{k^{k}-k!}<e$ and then
$log_{a}n>\ln n$.

It follows from the union bound that
\begin{eqnarray*}
  Pr[\ \bigcap\limits_{S} D_{S}\ ] &=& 1- Pr[\ \bigcup\limits_{S} \overline{D_{S}}\ ] \\
  &\geq& 1-\sum\limits_{S}Pr[\ \overline{D_{S}}\ ]\\
  &>& 1-{n \choose k}n^{-\frac{c_{1}^{k}(n-k)}{2n}(\frac{(c_{1}^{k}-2k)n-c_{1}^{k}k}{c_{1}^{k}(n-k)})^{2}} \\
   &>& 1-n^{k-\frac{c_{1}^{k}(n-k)}{2n}(\frac{(c_{1}^{k}-2k)n-c_{1}^{k}k}{c_{1}^{k}(n-k)})^{2}}.\\
\end{eqnarray*}
It is not hard to see that $c_{1}>3$ can guarantee
$k-\frac{c_{1}^{k}(n-k)}{2n}(\frac{(c_{1}^{k}-2k)n-c_{1}^{k}k}{c_{1}^{k}(n-k)})^{2}<0$
for sufficiently large $n$. Then $\lim\limits_{n\rightarrow\infty}
1-n^{k-\frac{c_{1}^{k}(n-k)}{2n}(\frac{(c_{1}^{k}-2k)n-c_{1}^{k}k}{c_{1}^{k}(n-k)})^{2}}=1$,
which implies $\lim\limits_{n\rightarrow\infty}Pr[\
\bigcap\limits_{S} D_{S}\ ]=1$ as desired.

\emph{\textbf{Claim 2:}} If any $k$ vertices in $G(n,p)$ have at
least $2klog_{a}n$ common neighbors, then there exists a positive
integer $N=N(k)$ such that $rx_{k,\ell}(G(n,p))\leq k$ for every
integer $n\geq N$.

Let $C=\{1,2,\cdots,k\}$ be a set of $k$ different colors. We color
the edges of $G(n,p)$ with the colors from $C$ randomly and
independently. For $S\subseteq V(G(n,p))$ with $|S|=k$, define
$E(S)$ as the event that there exist at least $\ell$ internally
disjoint rainbow $S$-trees. It suffices to prove that Pr[ $\bigcap\limits_{S}E(S)$ ]$>0$.

Suppose $S=\{v_{1}, v_{2}, \cdots, v_{k}\}$. For any common neighbor
$u$ of the vertices in $S$, let $T(u)$ denote the star with
$V(T(u))=\{u,v_{1}, v_{2},\cdots,$ $v_{k}\}$ and $E(T(u))=\{uv_{1},
uv_{2}, \cdots, uv_{k}\}$. Set $\mathcal{T}=\{T(u)| u$ is a common
neighbor of the vertices in $S \}$. Then $\mathcal{T}$ is a set of
at least $2klog_{a}n$ internally disjoint $S$-trees. It is easy to
see that $q$:=Pr[T$\in \mathcal{T}$ is a rainbow
tree]=$\frac{k!}{k^{k}}<\frac{1}{2}$. So $1-q>q$. Define $X$ as the
number of rainbow S-trees in $\mathcal{T}$. Then we have
\begin{eqnarray*}
 Pr[\overline{E(S)}] &\leq& Pr[X\leq\ell-1] \\
  &\leq& \sum\limits_{i=0}^{\ell-1}{2klog_{a}n \choose i}q^{i}(1-q)^{2klog_{a}n-i}\\
  &\leq& (1-q)^{2klog_{a}n}\sum\limits_{i=0}^{\ell-1}{2klog_{a}n \choose i}\\
  &\leq& (1-q)^{2klog_{a}n}(1+2klog_{a}n)^{\ell-1}\\
  &=& \frac{(1+2klog_{a}n)^{\ell-1}}{n^{2k}}.\\
\end{eqnarray*}
It yields that
\begin{eqnarray*}
  Pr[\ \bigcap\limits_{S} E(S)\ ] &=& 1- Pr[\ \bigcup\limits_{S} \overline{E(S)}\ ] \\
  &\geq& 1-\sum\limits_{S}Pr[\ \overline{E(S)}\ ]\\
  &\geq& 1-{n \choose k}\frac{(1+2klog_{a}n)^{\ell-1}}{n^{2k}} \\
   &>& 1-\frac{(1+2klog_{a}n)^{\ell-1}}{n^{k}}.\\
\end{eqnarray*}
Obviously,
$\lim\limits_{n\rightarrow\infty}1-\frac{(1+2klog_{a}n)^{\ell-1}}{n^{k}}=1$,
and then $\lim\limits_{n\rightarrow\infty}Pr[\ \bigcap\limits_{S}
E_{S}\ ]=1$. Thus there exists a positive integer $N=N(k)$ such that
$Pr[\ \bigcap\limits_{S} E_{S}\ ]>0$ for every integer $n\geq N$.

For the other direction, we show that there exists a positive
constant $c_{2}$ such that for every $p\leq
c_{2}\sqrt[^k\!]{\frac{log_{a}n}{n}}$, almost surely
$rx_{k,\ell}(G_{n,p})\geq k+1$.

It suffices to prove that for a sufficiently small constant $c_{2}$,
the random graph $G(n,p)$ with
$p=c_{2}\sqrt[^k\!]{\frac{log_{a}n}{n}}$ almost surely contains a
set $S$ of $k$ vertices satisfying

(i) $S$ is an independent set;

(ii) the vertices in $S$ have no common neighbors.

Clearly, for such $S$, there exists no rainbow $S$-trees in any
$k$-edge-coloring, which implies $rx_{k,\ell}(G_{n,p})\geq k+1$.

Fix a set $H$ of $n^{1/(2k+1)}$ vertices in $G_{n,p}$ (we may and
will assume that $n^{1/(2k+1)}/k$ is an integer). Let $E_{1}$ be the
event that $H$ is an independent set. Then
\begin{center}
pr[$E_{1}$]$=(1-c_{2}\sqrt[^k\!]{\frac{log_{a}n}{n}})^{n^{1/(2k+1)}
\choose 2}=1-o(1),$
\end{center}
where $o(1)$ denotes a function tending to 0 as n tends to infinity.

Partition $H$ into $t$ subsets $H_{1},H_{2}\ldots,H_{t}$
arbitrarily, where $t=n^{1/(2k+1)}/k$ and
$|H_{1}|=|H_{2}|=\ldots=|H_{t}|=k$. Let $E_{2}$ be the event that
there exists some $H_{i}$ without common neighbors in
$V(G_{n,p})\backslash H$. Then for sufficiently small $c_{2}$,
\begin{center}
Pr[$E_{2}$]$=1-(1-(1-c_{2}^{k}\frac{log_{a}n}{n})^{n-n^{1/(2k+1)}})^{n^{1/(2k+1)}/k}=1-o(1).$
\end{center}
So almost surely there exists some set $H_{i}$ of $k$ vertices
satisfying properties (i) and (ii). Thus for sufficiently small
$c_{2}$ and every $p\leq c_{2}\sqrt[^k\!]{\frac{log_{a}n}{n}}$,
almost surely $rx_{k,\ell}(G_{n,p})\geq k+1$. The proof is complete.
\end{proof}

Next we will turn to another well-known random graph model
$\mathcal{G}(n,M)$. We start with a useful lemma which reveals the
relationship between $\mathcal{G}(n,p)$ and $\mathcal{G}(n,M)$. Set
$N={n\choose2}$.
\begin{lemma}\cite{BB}\label{lem4}
 If $Q$ is a convex property and $p(1-p)N\rightarrow \infty$, then
 $G_{n,p}$ almost surely has $Q$ if and only if for every fixed $x$,
 $G_{n,M}$ almost surely has $Q$, where $M=\lfloor pN+x(p(1-p)N)^{1/2}\rfloor$.
\end{lemma}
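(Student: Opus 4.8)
The plan is to deduce this convex-property statement from the classical $\mathcal G(n,p)$--$\mathcal G(n,M)$ correspondence for \emph{monotone} properties. Given the convex property $Q$ (on graphs with vertex set $[n]$), I would introduce two auxiliary properties: let $Q_{+}$ be ``$G$ contains a spanning subgraph with property $Q$'' and let $Q_{-}$ be ``$G$ is a spanning subgraph of some graph with property $Q$''. Then $Q_{+}$ is monotone increasing and $Q_{-}$ is monotone decreasing, and the only place convexity is used is the identity $Q=Q_{+}\cap Q_{-}$: if $G\in Q_{+}\cap Q_{-}$ there are graphs $F\subseteq G\subseteq H$ with $F,H\in Q$, whence $G\in Q$ by convexity, and the reverse inclusion $Q\subseteq Q_{+}\cap Q_{-}$ is trivial (take $F=H=G$).

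Next I would record the elementary fact that, in either random graph model, a random graph a.s.\ satisfies $Q_{1}\cap Q_{2}$ if and only if it a.s.\ satisfies $Q_{1}$ and a.s.\ satisfies $Q_{2}$ (one direction by the union bound, the other since $\Pr[Q_{1}\cap Q_{2}]\le\Pr[Q_{i}]$). Applying this with the decomposition $Q=Q_{+}\cap Q_{-}$ in both $\mathcal G(n,p)$ and $\mathcal G(n,M)$, the lemma reduces to proving, separately for the monotone property $Q_{+}$ and for the monotone property $Q_{-}$, that $G_{n,p}$ a.s.\ has it if and only if $G_{n,M}$ a.s.\ has it for every fixed $x$. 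Passing to complements (so $G_{n,p}\mapsto G_{n,1-p}$, $G_{n,M}\mapsto G_{n,N-M}$, and $x\mapsto -x$) and reversing the order of edge exposure interchanges the increasing and the decreasing cases, so it suffices to treat a monotone \emph{increasing} property, which I continue to call $Q$.

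For the monotone increasing case I would use the standard coupling of the two models with the edge-exposure process: fix a uniformly random ordering $e_{1},\dots,e_{N}$ of the $N$ potential edges, put $\widetilde G_{m}=\{e_{1},\dots,e_{m}\}$ (so $\widetilde G_{m}$ has the law of $G_{n,m}$ and $\widetilde G_{0}\subseteq\widetilde G_{1}\subseteq\cdots\subseteq\widetilde G_{N}$), and realise $G_{n,p}$ as $\widetilde G_{e}$ where $e\sim\mathrm{Bin}(N,p)$ is independent of the ordering. Writing $q_{m}=\Pr[G_{n,m}\in Q]$, monotonicity of $Q$ makes $q_{m}$ nondecreasing in $m$, while conditioning on the number of edges gives $\Pr[G_{n,p}\in Q]=\sum_{m=0}^{N}\Pr[e=m]\,q_{m}$. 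Since $e$ has mean $pN$ and variance $p(1-p)N\to\infty$, Chebyshev's inequality gives $\Pr[e\ge M]\ge 1-x^{-2}$ when $x<0$, and the central limit theorem for binomials (which applies precisely because $p(1-p)N\to\infty$) gives $\Pr[e\le M]\to\Phi(x)>0$ for every fixed $x$, where $M=\lfloor pN+x(p(1-p)N)^{1/2}\rfloor$ and $\Phi$ is the standard normal distribution function. From $\Pr[G_{n,p}\in Q]\ge q_{M}\,\Pr[e\ge M]$ together with monotonicity of $q$, letting $x\to-\infty$ shows that if $G_{n,M}$ a.s.\ has $Q$ for every fixed $x$ then so does $G_{n,p}$; conversely, $\Pr[G_{n,p}\notin Q]\ge \Pr[e\le M]\,(1-q_{M})$ shows that $\Pr[G_{n,p}\in Q]\to1$ forces $q_{M}\to1$ for every fixed $x$.

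The substantive step is the opening reduction: one must notice that convexity says exactly that $Q$ is squeezed between the largest monotone decreasing property it contains and the smallest monotone increasing property containing it, so that the whole question factors into two monotone questions. Everything after that is the routine observation that the number of edges of $G_{n,p}$ is a binomial random variable concentrated in a window of width $\Theta((p(1-p)N)^{1/2})$ about $pN$; the hypothesis $p(1-p)N\to\infty$ enters only to make that window nondegenerate, i.e.\ to guarantee that $e(G_{n,p})$ places positive asymptotic mass at the value $M=pN+x(p(1-p)N)^{1/2}+O(1)$, which is where the central limit theorem is invoked.
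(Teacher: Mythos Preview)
The paper does not prove this lemma at all: it is quoted verbatim from Bollob\'as's \emph{Random Graphs} and used as a black box, so there is no ``paper's own proof'' to compare against. Your argument is the standard one (and essentially the one in Bollob\'as): decompose the convex property as $Q=Q_{+}\cap Q_{-}$ with $Q_{+}$ increasing and $Q_{-}$ decreasing, reduce the decreasing case to the increasing one by edge-complementation, and then handle the monotone case via the coupling $\Pr[G_{n,p}\in Q]=\sum_{m}\Pr[e=m]\,q_{m}$ together with the concentration of $e\sim\mathrm{Bin}(N,p)$ around $pN$ on the scale $(p(1-p)N)^{1/2}$. The reasoning is sound; the only cosmetic points are that under complementation $N-M$ equals $\lceil (1-p)N - x(p(1-p)N)^{1/2}\rceil$ rather than the corresponding floor, and that ``letting $x\to-\infty$'' should be read as the $\varepsilon$--$x$ argument you implicitly have in mind (fix $x$ with $x^{-2}<\varepsilon$, then send $n\to\infty$); both are harmless.
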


Clearly, the property that the $(k,\ell)$-rainbow index of a given
graph is at most $k$, is a convex property. By Theorem \ref{thm4.1}
and Lemma \ref{lem4}, we get the following result:
\begin{corollary}
For every pair $k,\ell$ of positive integers with $k\geq3$,
$M(n)=\sqrt[^k\!]{n^{2k-1}log_{a}n}$ is a sharp threshold function
for the property $rx_{k,\ell}(G_{n,M})\leq k$, where
$a=\frac{k^{k}}{k^{k}-k!}$.
\end{corollary}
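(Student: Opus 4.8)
The plan is to translate the sharp threshold for $rx_{k,\ell}(G_{n,p})\le k$ established in Theorem~\ref{thm4.1} into the corresponding statement for $G_{n,M}$ by invoking Lemma~\ref{lem4}, after first checking that its hypotheses are met. Write $p(n)=\sqrt[^k\!]{\log_a n/n}$ for the threshold in the $p$-model and $M_0(n)=\sqrt[^k\!]{n^{2k-1}\log_a n}$; note that $M_0(n)\sim p(n)\cdot N$ up to a constant factor, since $N={n\choose 2}\sim n^2/2$ and $p(n)N\sim \tfrac12 n^{2}\cdot n^{-1+1/k}(\log_a n)^{1/k}=\tfrac12\,n^{(2k-1)/k}(\log_a n)^{1/k}=\tfrac12 M_0(n)$. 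So up to the constant $\tfrac12$, the ``$M$-scale'' and ``$p\cdot N$-scale'' agree, which is what makes the stated $M(n)$ the right threshold.

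The main steps, in order, are as follows. First, I would record that $Q:=\{rx_{k,\ell}\le k\}$ is a convex graph property (this is already observed in the text immediately before the corollary: it is in fact monotone, hence a fortiori convex). Second, I would verify the variance condition $p(1-p)N\to\infty$ for the relevant range of $p$: when $p=\Theta(p(n))$ we have $p\to 0$, so $1-p\to 1$, and $pN=\Theta(n^{(2k-1)/k}(\log n)^{1/k})\to\infty$ since $(2k-1)/k\ge 5/3>0$; thus $p(1-p)N\to\infty$. Third, for the upper (a.s.\ satisfies) direction: given $C>0$ and any $M\ge C\,M_0(n)$, I would choose a suitable $p=p(M)$ with $p N$ slightly below $M$ — concretely take $p$ of order $M/N$, which is $\ge (C'/1)\,p(n)$ for an appropriate constant $C'$ — so that by Theorem~\ref{thm4.1} $G_{n,p}$ a.s.\ has $Q$, and then apply Lemma~\ref{lem4} (with $M=\lfloor pN+x(p(1-p)N)^{1/2}\rfloor$ for a fixed $x$ chosen so the floor hits the prescribed $M$, using that the fluctuation term $(p(1-p)N)^{1/2}=\Theta(\sqrt{pN})$ is of strictly smaller order than $pN$ and hence can be absorbed by adjusting the constant) to transfer $Q$ to $G_{n,M}$. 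Fourth, for the lower (a.s.\ fails) direction, I would run the symmetric argument: given small $c>0$ and $M\le c\,M_0(n)$, pick $p=p(M)=\Theta(M/N)\le c'\,p(n)$ so that $G_{n,p}$ a.s.\ does \emph{not} have $Q$ by Theorem~\ref{thm4.1}, and again use Lemma~\ref{lem4} — note the lemma is an ``if and only if'', so it transfers non-$Q$ as well — to conclude $G_{n,M}$ a.s.\ does not have $Q$. Combining the two directions yields that $M_0(n)$ is a sharp threshold function for $Q$ in $\mathcal G(n,M)$.

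The part requiring the most care is the bookkeeping in the third and fourth steps: Lemma~\ref{lem4} as stated fixes $M$ in terms of $p$ via $M=\lfloor pN+x(p(1-p)N)^{1/2}\rfloor$ with $x$ fixed, whereas here we are handed $M$ first and must produce a $p$ realizing it. The point is that as $x$ ranges over a bounded interval and $p$ over $[\,c'p(n),\,Cp(n)\,]$, the quantity $pN+x(p(1-p)N)^{1/2}$ sweeps out an interval of $M$-values that, for $n$ large, covers all of $[\,cM_0(n),\,CM_0(n)\,]$ on the appropriate side — because the spread from varying $p$ alone is of order $p(n)N=\Theta(M_0(n))$, dominating the $\Theta(\sqrt{M_0(n)})$ wobble from the $x$-term. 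One should also note the minor technical point that Lemma~\ref{lem4} is phrased for a single fixed $x$, so strictly one fixes $x$ (say $x=0$) and lets the constant in $p$ do all the work; the $\sqrt{pN}$ error term is then simply shown to be $o(p(n)N)$ and folded into the constants $c,C$ defining sharpness. No genuinely new combinatorial input is needed beyond Theorem~\ref{thm4.1} and Lemma~\ref{lem4}; the corollary is a routine, if slightly fiddly, model-transfer.
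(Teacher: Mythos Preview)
Your proposal is correct and follows essentially the same approach as the paper: the paper simply observes that the property $rx_{k,\ell}\le k$ is convex and then invokes Theorem~\ref{thm4.1} together with Lemma~\ref{lem4} without further detail. Your write-up is in fact more careful than the paper's, since you explicitly verify the hypothesis $p(1-p)N\to\infty$ and spell out the constant-matching between $p(n)N$ and $M_0(n)$.
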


\noindent \textbf{Remark:} If $p$ is a threshold function for a
given property $Q$, then so is $\lambda p$ for any positive constant
$\lambda$. It follows that $p(n)=\sqrt[^k\!]{\frac{logn}{n}}$ (
$M(n)=\sqrt[^k\!]{n^{2k-1}logn}$ ) is also a sharp threshold
function for the property $rx_{k,\ell}(G_{n,p})\leq k$ (
$rx_{k,\ell}(G_{n,M})\leq k$ ), which correspond to the results in
\cite{Fujita}.

\end{document}